\theoremstyle{plain}
\newtheorem{theorem}{Theorem}
\newtheorem{conj}[theorem]{Conjecture}
\newtheorem{lemma}[theorem]{Lemma}
\newtheorem{question}[theorem]{Question}
\theoremstyle{definition}
\newtheorem{remark}[theorem]{Remark}
\newtheorem{ack}[theorem]{Acknowledgments}
\newcommand{\comment}[1]{}
\newcommand{\bdry}{\ensuremath{\partial}}
\newcommand{\nbhd}{\ensuremath{\mathcal{N}}}
\newcommand{\Z}{\ensuremath{\mathbb{Z}}}
\definecolor{purple}{rgb}{0.4,0,0.6}
\title{Exceptional surgeries in $3$--manifolds}
\author{Kenneth L.\ Baker}
\address{Department of Mathematics\\University of Miami\\ Coral Gables, FL 33146 \\ USA}
\email{k.baker@math.miami.edu}
\author{Neil R.\ Hoffman}
\address{Department of Mathematics\\ Oklahoma State University\\ Stillwater, OK  74078 \\ USA}
\email{neil.r.hoffman@okstate.edu}
\subjclass[2020]{Primary: 57K31, 57K32 Secondary: 57K35}
\keywords{exceptional surgeries, toroidal fillings, knots in handlebodies}
\begin{document}

\begin{abstract}
Myers shows that every compact, connected, orientable $3$--manifold with no $2$--sphere boundary components contains a hyperbolic knot. We use work of Ikeda with an observation of Adams-Reid to show that every $3$--manifold subject to the above conditions contains a hyperbolic knot which admits a non-trivial non-hyperbolic surgery, a toroidal surgery in particular.  We conclude with a question and a conjecture about reducible surgeries.	
\end{abstract}

\maketitle


Myers shows that there are hyperbolic knots in every compact, connected, orientable $3$--manifold whose boundary contains no $2$--spheres \cite{MYERSEXCELLENT}.
Might there be such a $3$--manifold for which every hyperbolic knot has no non-trivial exceptional surgeries?  

One approach to showing the answer is {\bf Yes} would be to prove that there exists a $3$--manifold in which every hyperbolic knot has cusp volume larger than $18$ so that the $6$--Theorem \cite{agol2000, lackenby2000word} would obstruct any exceptional surgery.  However, \cite[Corollary 5.2]{ACFGK-cuspsizebounds} implies that every closed, connected, orientable $3$--manifold contains infinitely many hyperbolic knots with cusp volume at most $9$.  So this approach will not work.   Furthermore, the knots constructed in 
\cite[Corollary 5.2]{ACFGK-cuspsizebounds} do not necessarily have any exceptional surgery, so that work does not address our question.  

In this short note we demonstrate the answer to the question is actually {\bf No} by constructing hyperbolic knots with a non-trivial toroidal surgery in any $3$--manifold.

\begin{theorem}\label{thm:main}
	Let $M$ be a compact, connected, orientable $3$--manifold such that $\bdry M$ contains no $2$--spheres. There exist infinitely many hyperbolic knots in $M$ that admit a toroidal surgery.   	
\end{theorem}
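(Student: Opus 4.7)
The strategy is to start from a hyperbolic knot in $M$ supplied by Myers' theorem, and then graft in a tangle from Ikeda's construction so that the resulting knot carries a distinguished toroidal surgery slope while remaining hyperbolic. Let $K_0 \subset M$ be a hyperbolic knot from Myers, and let $B \subset M$ be a small $3$-ball meeting $K_0$ in a trivial arc $\alpha$. Replace $\alpha$ with an Ikeda-type tangle $T \subset B$, engineered so that $B \setminus T$ is hyperbolic and contains an essential twice-punctured torus $F$ whose two boundary components both run along a common slope $r$ on the torus that becomes $\partial \nu(K)$, where $K \subset M$ denotes the resulting knot. Performing $r$-surgery on $K$ caps off the two boundary components of $F$ with meridian disks of the surgery solid torus, producing an essential torus in $K(r)$ and certifying that $r$ is a toroidal slope.

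The main step is to verify that $K$ is hyperbolic in $M$. Here the observation of Adams--Reid applies: any essential sphere, disk, annulus, or torus $S$ in the exterior of $K$ can be placed in general position with respect to the gluing torus $\partial B$, so that $S$ decomposes into pieces lying in $B \setminus T$ and pieces lying in $M \setminus (B \cup K_0)$. The second piece is controlled by the hyperbolicity of $M \setminus K_0$, while the tangle $T$ is arranged so that no essential surface in $B \setminus T$ can have boundary slopes compatible with those forced on $\partial B$ from the outside. A gluing argument then rules out such an $S$, and Thurston's geometrization theorem forces $M \setminus K$ to be hyperbolic. Non-triviality of the surgery follows because $K(r)$ contains an essential torus but $M$ need not, and in any event the slope $r$ is not the meridian by construction.

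Finally, varying $T$ within an infinite family of Ikeda tangles (indexed, for instance, by twist parameters) produces infinitely many distinct hyperbolic knots, distinguishable by, e.g., hyperbolic volume or the toroidal surgery slope itself. The main obstacle is the hyperbolicity verification: controlling essential surfaces that genuinely cross the gluing torus $\partial B$ requires the matching of boundary slopes supplied by Ikeda's construction together with the Adams--Reid observation, and it is this interplay that makes the two cited inputs work in tandem.
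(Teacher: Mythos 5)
Your proposal has a genuine gap at its central step, the hyperbolicity of $K$. You propose to decompose the exterior of $K$ along $\partial B$ and invoke the Adams--Reid observation, but $\partial B$ is a $2$--sphere meeting $K$, so the decomposing surface $\partial B - \nu(K)$ is a twice-punctured sphere, i.e.\ an annulus (assuming $T$ is a one-strand tangle, as it must be for $K$ to remain a knot). An essential annulus cannot exist in a hyperbolic knot exterior, and the Adams--Reid observation concerns gluing two hyperbolic pieces along a \emph{closed} surface that is totally geodesic (or at least essential and anannular) on both sides; it says nothing about a tangle decomposition along a punctured sphere. This is also not what Ikeda provides: his theorem produces embedded genus-$2$ handlebodies in $M$ whose exteriors are anannular hyperbolic with totally geodesic boundary --- a closed separating surface disjoint from any knot you later place inside the handlebody --- not hyperbolic tangles in balls. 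The paper's proof exploits exactly this: it puts $K$ inside such a handlebody $H$, shows (by doubling and a verified SnapPy computation) that $H-K$ is anannular hyperbolic with $\partial H$ totally geodesic, and only then applies Adams--Reid to the closed surface $\partial H$. Myers' theorem is not used at all; your attempt to combine Myers (for the ambient knot) with a tangle replacement cannot be pushed through the cited observation as stated.

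A second, independent gap: you assert that capping off the twice-punctured torus $F$ produces an \emph{essential} torus in $K(r)$, but incompressibility of that torus after surgery is precisely the other half of the work and does not follow from $F$ being essential in the knot exterior. The paper confronts the analogous issue (there the surface is a once-punctured Klein bottle $\Sigma$, capped off to a Klein bottle $\widehat{\Sigma}$) by proving a second hyperbolicity statement: the complement of $\widehat{\Sigma}$ in the surgered handlebody is again anannular hyperbolic with totally geodesic boundary, so the surgered manifold minus $\widehat{\Sigma}$ decomposes into hyperbolic pieces and $\partial\nu(\widehat{\Sigma})$ must be incompressible. Some argument of this kind --- controlling the surgered manifold on the far side of the new torus --- is unavoidable, and your proposal omits it entirely.
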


\begin{proof}
	Let $M$ be a compact, connected orientable $3$--manifold whose boundary contains no $2$--spheres.  In \cite{ikeda2012hyperbolic}, Ikeda shows that $M$ contains an infinite family of embedded genus $2$ handlebodies in $M$, each with hyperbolic and anannular complement of its interior where its genus $2$ boundary is totally geodesic.
	Let $H$ be any one of these handlebodies.

	In Lemma~\ref{lem:hypknot} we find a knot $K$ in $H$ that bounds an embedded once-punctured Klein bottle $\Sigma$ such that $H-K$ is a one-cusped anannular hyperbolic manifold in which $\bdry H$ is totally geodesic.   Therefore $M-K$ decomposes along $\bdry H$ into two anannular hyperbolic manifolds.   Thus, following an observation of Adams and Reid \cite[Observation 2.1]{adams_reid_1993},  $M-K$ is a hyperbolic manifold containing a quasi-Fuchsian surface isotopic to $\bdry H$, and $K$ is a hyperbolic knot in $M$.  (Note that while $\bdry H$ is totally geodesic in both $M-int(H)$ and $H-K$, its hyperbolic structure may not be the same in these two manifolds.  Hence $\bdry H$ is not necessarily totally geodesic in $M-K$.) 
	
	Since $K$ bounds the once-punctured Klein bottle $\Sigma$, surgery on $K$ along the slope $\sigma$ of $\bdry \Sigma$ produces a manifold $M_K(\sigma)$ containing an embedded Klein bottle $\widehat{\Sigma}$.  The manifold $M_K(\sigma)$ will be toroidal unless the torus $\bdry \nbhd(\widehat{\Sigma})$ compresses.  However, Lemma~\ref{lem:hypknot} shows that $K$ may be further chosen in $M$ so that $H_K(\sigma)-\widehat{\Sigma}$ is also a one-cusped anannular hyperbolic manifold in which $\bdry H$ is totally geodesic boundary. Therefore, as  $M_K(\sigma) - \widehat{\Sigma}$ decomposes along $\bdry H$ into the hyperbolic manifolds $M-int(H)$ and $H_K(\sigma)-\widehat{\Sigma}$, it follows that $\bdry \nbhd(\widehat{\Sigma})$ must be incompressible in $M_K(\sigma)$.   
\end{proof}

\begin{lemma}
	\label{lem:hypknot}
	There is a knot $K$ in a genus $2$ handlebody $H$ that bounds a once-punctured Klein bottle $\Sigma$ so that $H-K$ is a one-cusped anannular hyperbolic manifold in which $\bdry H$ is totally geodesic.
	Hence surgery on $K$ along the slope $\sigma$ of $\bdry \Sigma$ produces a manifold $H_K(\sigma)$ containing an embedded Klein bottle $\widehat{\Sigma}$.
	Furthermore, $K$ may be chosen so that $H_K(\sigma) - \widehat{\Sigma}$ is also a one-cusped anannular hyperbolic manifold in which $\bdry H$ is totally geodesic.
\end{lemma}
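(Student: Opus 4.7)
The plan is to exhibit an explicit knot $K$ in a genus $2$ handlebody $H$, together with an embedded once-punctured Klein bottle $\Sigma$ bounded by $K$, and to verify the required geometric conditions directly. The key topological observation I would use is that both target manifolds are built from a single compact piece by two different completions.

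Set $X := H - \Int \nbhd(K \cup \Sigma)$. Then $\bdry X = \bdry H \sqcup F$, where $F$ is a second genus $2$ surface. The surface $F$ decomposes, along two parallel copies of the slope $\sigma = \bdry \Sigma$, into an annulus $A \subset \bdry \nbhd(K)$ and a twice-punctured torus $\widetilde{\Sigma}$, the orientation double cover of $\Sigma$. The two target manifolds are then
\[
H - \nbhd(K) \;=\; X \cup \nbhd(\Sigma) \quad \text{and} \quad H_K(\sigma) - \nbhd(\widehat{\Sigma}) \;=\; X \cup B^3,
\]
where in the first the twisted $I$-bundle $\nbhd(\Sigma)$ is reglued along $\widetilde{\Sigma}$, with its vertical annulus matching $A$ to reform $\bdry \nbhd(K)$, and in the second a $3$-ball is attached along $A$, completing $\widetilde{\Sigma}$ to the torus $\bdry \nbhd(\widehat{\Sigma})$ by two disks. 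Crucially, $\bdry H$ lies in $X$ disjoint from the locus where these completions differ, and so persists as a boundary component of both assemblies.

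To produce the example, I would search the class of compact orientable hyperbolic $3$-manifolds with one torus cusp and one totally geodesic genus $2$ boundary. This class is tractable in low complexity via the census of Frigerio--Martelli--Petronio, Ikeda's construction, or Andreev's theorem for right-angled polyhedra. The conditions to verify on a candidate are: anannularity; the underlying manifold is homeomorphic to $H - K$ for a knot $K$ in a genus $2$ handlebody $H$ with $\bdry H$ the totally geodesic surface; and $K$ bounds an embedded once-punctured Klein bottle $\Sigma$ in $H$. The last condition can be recognized from the mod-$2$ homology of the knot complement and verified by exhibiting an explicit embedding, after which $\sigma = \bdry \Sigma$ is determined.

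The main obstacle is verifying that $H_K(\sigma) - \nbhd(\widehat{\Sigma})$ is hyperbolic with $\bdry H$ totally geodesic and anannular. Its hyperbolic structure is not inherited from that of $H - K$, and the Adams--Reid observation does not apply directly because neither $\nbhd(\Sigma)$ nor $B^3$ is hyperbolic on its own. The most practical route is a direct computer-assisted verification on a specific low-complexity example; a more elegant alternative is a symmetric construction in which an orientation-reversing isometry of $X$ relates the two completions, so that the geometric conditions for both assemblies follow from the same data.
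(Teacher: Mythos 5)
Your topological setup is sound and genuinely useful: writing both $H-\nbhd(K)$ and $H_K(\sigma)-\nbhd(\widehat{\Sigma})$ as two different completions of the common piece $X = H - \Int\nbhd(K\cup\Sigma)$ is exactly the right way to see why a single choice of $(K,\Sigma)$ controls both target manifolds. But the proposal stops where the actual work begins, and you have correctly identified---without resolving---the step that carries all the difficulty: certifying that the second completion $X\cup_A B^3$ is hyperbolic, anannular, and has $\bdry H$ totally geodesic. Neither of your suggested remedies is carried out, and the second one (an orientation-reversing isometry of $X$ relating the two completions) is not what one should expect to find: the two completions are genuinely different hyperbolic manifolds, in general inducing different structures on $\bdry H$, so no symmetry of $X$ interchanges them. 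A census search is also not obviously viable, because the candidate must simultaneously be a knot exterior in a handlebody, contain an embedded once-punctured Klein bottle, \emph{and} have the property that the associated second completion is again hyperbolic, anannular, and totally geodesic along $\bdry H$; that last condition cannot be read off from a census entry and is exactly the open issue.

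The idea you are missing is to make the entire construction equivariant under an involution, so that every manifold in sight is the double branched cover of an explicit tangle. In the paper, $K$ is the lift of an arc $k$ in a trivial $3$--strand tangle (carrying two surgery parameters $n,m$), the surface $\Sigma$ descends to a disk $D$ meeting the branch locus, and---this is the key point---the complement $H_K(\sigma)-\widehat{\Sigma}$ is the double branched cover of the tangle obtained by deleting a neighborhood of the capped-off disk $\widehat{D}$, which is again the exterior of an arc $k'$ in a trivial $3$--strand tangle. Hence $H_K(\sigma)-\widehat{\Sigma}\cong H'-K'$ for a second explicit knot in a genus $2$ handlebody, and both halves are certified by one and the same procedure: double across the genus $2$ boundary to obtain a link in $S^1\times S^2\,\#\,S^1\times S^2$, verify hyperbolicity of the double with a verified SnapPy computation (extended to all sufficiently large $|n|,|m|$ by a hyperbolic Dehn surgery argument, which is also what yields infinitely many examples), and use the reflection symmetry of the double to conclude that $\bdry H$, respectively $\bdry H'$, is totally geodesic, so that each half is a one-cusped anannular hyperbolic manifold with totally geodesic boundary. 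This tangle-quotient mechanism for identifying and then certifying the second completion is the concrete content your proposal needs and does not supply.
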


\begin{proof}
	Figure~\ref{fig:knotext}(a) shows a surgery description of a trivial $3$--strand tangle in the ball, along with an arc $k$ that has its endpoints on the tangle strands. 
	Figure~\ref{fig:knotext}(b)	shows the result of an isotopy in which the tangle is more obviously trivial at the expense of elongating the arc $k$.  The double branched cover of this trivial  $3$--strand tangle is a handlebody $H$ in which the arc $k$ lifts to a knot $K$. Figure~\ref{fig:knotext}(c),(d), and (e) illustrate the construction of the knot $K$ in the handlebody $H$.  In (c), two caps with red dual arcs are attached to the $3$--strand tangle to form a trivial $1$--strand tangle in the ball.  After straightening the strand in (d), the double branched cover is taken in (e).  The two caps each lift to $2$--handles attached to $H$.  The two red arcs in (e) are the co-cores of these two $2$--handles, so $H$ is obtained by drilling them out.  For lifting the surgery description, note that a curve linking the branch locus once with surgery coefficient $1/2a$ lifts to a single curve with surgery coefficient $1/a$.  Thus for each pair of integers $n,m$, we obtain a knot $K$ in a genus $2$ handlebody $H$.
	
	Figure~\ref{fig:knotext}(f) shows a surgery description of the double of $(H,K)$ across $\bdry H$, the link $K \cup \overline{K}$ in $H \cup \overline{H} = S^1 \times S^2 \# S^1 \times S^2$, obtained by mirroring Figure~\ref{fig:knotext}(e) and performing $0$ surgery on the components formed from the co-cores of the $2$--handles and their mirrors.
	The certificate for a verified computation in SnapPy \cite{SnapPy} confirms that the complement of the link $K \cup \overline{K}$ in $S^1 \times S^2 \# S^1 \times S^2$ is hyperbolic for choices of $n,m \in \Z$ with $|n|$ and $|m|$ suitably large. We provide that certificate as an ancillary file to this arxiv posting. 
	(More specifically, a verified computation in SnapPy shows that after doing the two $0$-surgeries on the two red components in Figure~\ref{fig:knotext}(f), the resulting  $6$--component link in $S^1 \times S^2 \# S^1 \times S^2$ has a hyperbolic complement. Then there is a constant $N$ such that the $2$--component link complement resulting from the surgeries on the green and purple components will be hyperbolic if both $|n|>N$ and $|m|>N$; see  \cite[Lemma
	5]{kojima1988isometry} or \cite[Theorem 3.1]{BHL2019jointly}.)
	Since the double has the reflective symmetry in which $\bdry H$ is the fixed set, it must be a totally geodesic surface.  Hence $H-K$ must be a one-cusped anannular hyperbolic manifold in which $\bdry H$ is totally geodesic.
		
	In Figure~\ref{fig:knotext}(e) one observes that $K$ bounds a once-punctured Klein bottle $\Sigma$ in $H$ that is disjoint from the two curves of the surgery description. As such, Dehn surgery on $K$ in $H$ along the boundary slope $\sigma = \bdry \Sigma$ produces the manifold $H_K(\sigma)$ which contains the Klein bottle $\widehat{\Sigma}$ obtained by capping off $\Sigma$ with a meridional disk of the surgery.

	All that remains is to show that $\widehat{\Sigma}$ is essential in the filling. First, we may understand the complement of $\widehat{\Sigma}$ through tangles.
	As apparent in Figure~\ref{fig:knotext}(e), the surface $\Sigma$ may be taken to be invariant under the involution of $H$ from the branched covering so that the fixed set intersects $\Sigma$ in two points and an arc.  Then $\Sigma$ descends to a disk $D$ containing the arc $k$ in its boundary and meeting the branch locus in the remainder of its boundary and two points in its interior.  This disk $D$ may be tracked from its initial quotient of $\Sigma$ in Figure~\ref{fig:knotext}(d) back to  Figure~\ref{fig:knotext}(a).  
	Now Figure~\ref{fig:tanglecomplements}(a) shows the exterior of the arc $k$ while Figure~\ref{fig:tanglecomplements}(b) shows the rational tangle filling associated to $\sigma$--framed surgery on $K$.  In particular, the disk $D-k$ is completed to a disk $\widehat{D}$ containing the closed component of the branch locus as its boundary and meeting the strands of the branch locus in two interior points.   Indeed, the double branched cover of the tangle Figure~\ref{fig:tanglecomplements}(b) is the manifold $H_K(\sigma)$ in which $\widehat{D}$ lifts to $\widehat{\Sigma}$.
	Finally, Figure~\ref{fig:tanglecomplements}(c) shows the tangle that is the complement of a small regular neighborhood of $\widehat{D}$.

	Figure~\ref{fig:kleinext}(a) shows a rational tangle filling of Figure~\ref{fig:tanglecomplements}(c) with the arc $k'$ that is the core of the rational tangle.  This $3$--strand tangle is a trivial tangle as made more apparent in Figures~\ref{fig:kleinext}(b), (c), and (d) which isotop the tangle while elongating arc $k'$. As before, (c) shows the attachment of two caps with dual arcs and (d) straightens the resulting $1$--strand tangle. Figure~\ref{fig:kleinext}(e) shows the double branched cover which illustrates the lift of the arc $k'$ as the knot $K'$ in another genus $2$ handlebody $H'$.  Again, the two caps each lift to $2$--handles attached to $H'$, the two red arcs in (e) are the co-cores of these two $2$--handles, and so $H'$ is obtained by drilling them out.  Note that the knot $K'$ in $H'$ depends on the previously chosen pair of integers $n,m$ of the surgery description.

	It now follows that, by construction, $H_K(\sigma) - \widehat{\Sigma}$ is homeomorphic to $H'-K'$.   We show that $H'-K'$ is a one-cusped anannular hyperbolic manifold in which $\bdry H'$ is totally geodesic just as we did for $H-K$.
	Figure~\ref{fig:kleinext}(f)  shows a surgery description of the double of $(H',K')$ across $\bdry H'$, the link $K' \cup \overline{K'}$ in $H' \cup \overline{H'} = S^1 \times S^2 \# S^1 \times S^2$, obtained by mirroring Figure~\ref{fig:kleinext}(e) and performing $0$ surgery on the components formed from the co-cores of the $2$--handles and their mirrors.
	Just as before, the certificate for a verified computation in SnapPy \cite{SnapPy} confirms that the complement of the link $K' \cup \overline{K'}$ in $S^1 \times S^2 \# S^1 \times S^2$ is hyperbolic if both $|n|$ and $|m|$ are suitably large. We provide that certificate as an ancillary file to this arxiv posting. Since the double has the reflective symmetry in which $\bdry H'$ is the fixed set, it must be a totally geodesic surface.  Hence $H'-K'$ is a one-cusped anannular hyperbolic manifold in which $\bdry H'$ is totally geodesic.

	Since $H_K(\sigma) - \widehat{\Sigma} \cong H'-K'$, we obtain the desired results whenever $|n|$ and $|m|$ are large enough to be suitably large in both situations.  
	\end{proof}

\begin{remark}
To give a concrete example, taking $n=m=1$ is sufficient for the knots $K \subset H$ and $K' \subset H'$ to be hyperbolic.  Certainly, one could verify the hyperbolicity of these knots by hand in the spirit of what was done in \cite{adams_reid_1993}, but the argument would take longer.  Hence we content ourselves with verified computations in SnapPy \cite{SnapPy}.
\end{remark}

\begin{figure}
	\centering
	\includegraphics[width=\textwidth]{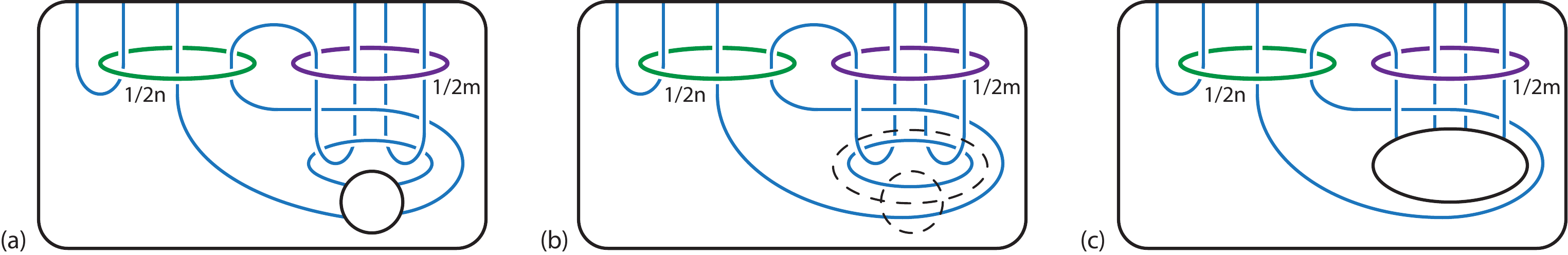}
	\caption{(b) A surgery description of a $3$--strand tangle in the ball with an unknot component that bounds a disk intersected twice by the strands.  (a) The complement of a rational tangle in this tangle.  (c) The complement of a small neighborhood of the disk bounded by the unknot component.}
	\label{fig:tanglecomplements}
\end{figure}

\begin{figure}
	\centering
	\includegraphics[width=\textwidth]{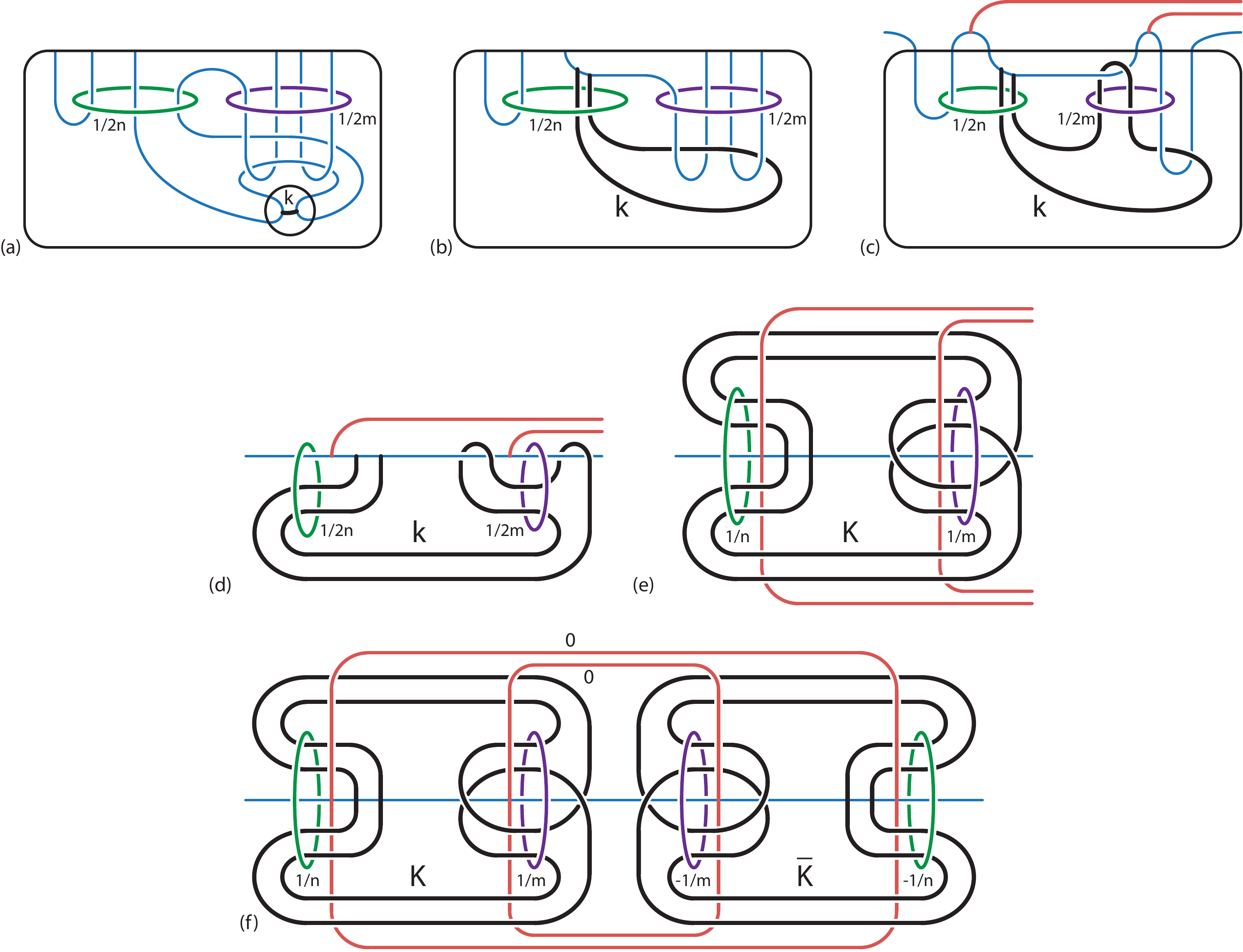}
	\caption{(a) A rational tangle filling of Figure~\ref{fig:tanglecomplements}(a) with its core arc $k$.  (b) \& (c) An isotopy showing the filled tangle is a rational $3$--strand tangle.  The arc $k$ is carried along.  (c) Attached to the rational $3$--strand tangle are two caps with their dual arcs to form a $1$--strand tangle in the ball.  (d)  The $1$--strand tangle is straightened.  (e) The double branched cover is formed. Drilling out the red arcs leaves a genus $2$ handlebody $H$ containing the knot $K$ that covers $k$. (f) A surgery description of the double of $(H,K)$ is formed. }
	\label{fig:knotext}
\end{figure}

\begin{figure}
	\centering
	\includegraphics[width=\textwidth]{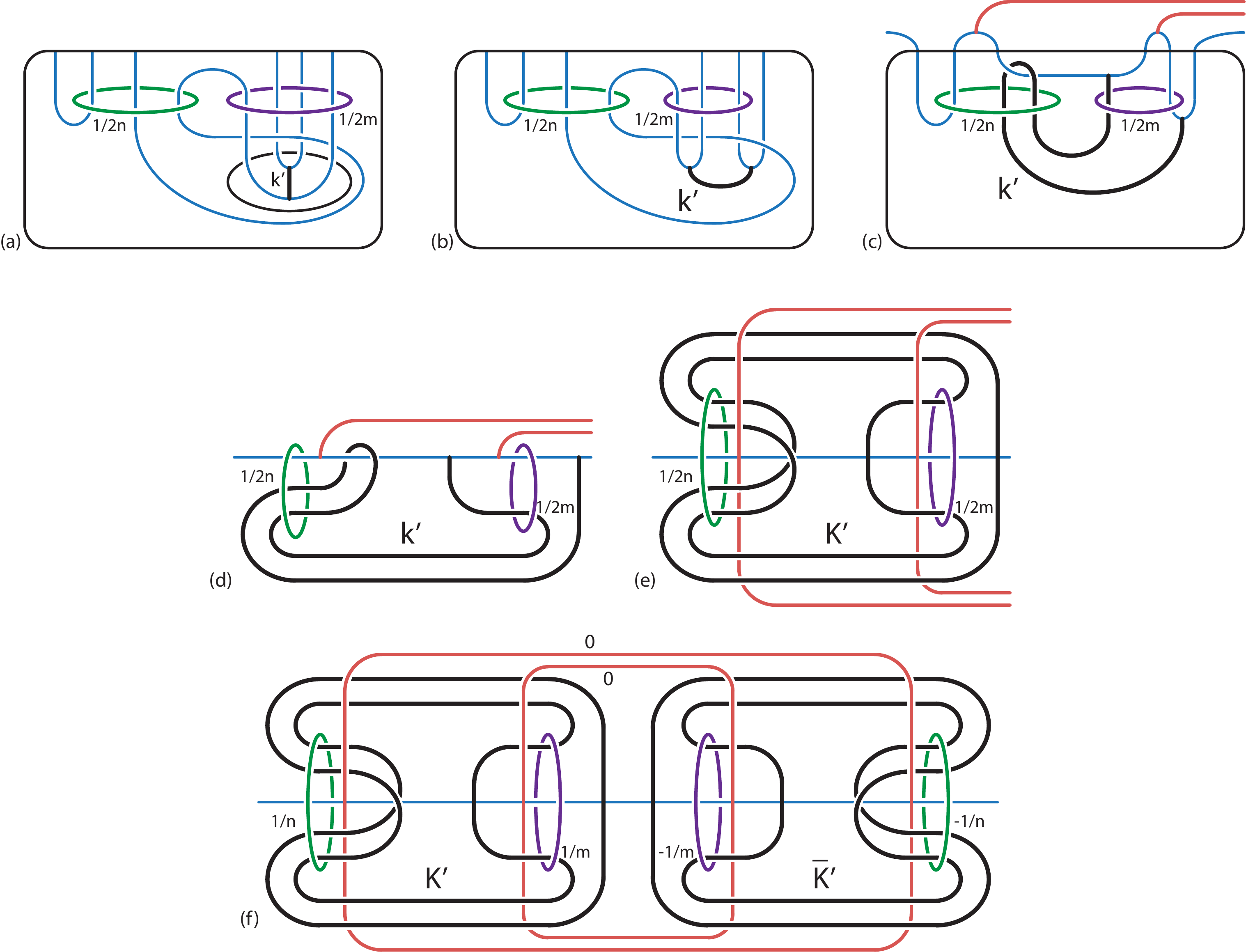}
	\caption{(a) A rational tangle filling of Figure~\ref{fig:tanglecomplements}(c) with its core arc $k'$.  (b) \& (c) An isotopy showing the filled tangle is a rational $3$--strand tangle.  The arc $k'$ is carried along.  (c) Attached to the rational $3$--strand tangle are two caps with their dual arcs to form a $1$--strand tangle in the ball.  (d)  The $1$--strand tangle is straightened.  (e) The double branched cover is formed. Drilling out the red arcs leaves a genus $2$ handlebody $H'$ containing the knot $K'$ that covers $k'$. (f) A surgery description of the double of $(H',K')$ is formed. }
	\label{fig:kleinext}
\end{figure}

What can be said about other kinds of exceptional surgeries?  Considerations of Betti numbers show that many closed, compact, orientable $3$--manifolds cannot contain a knot with a Dehn surgery to a lens space or a small Seifert fibered space.
In light of the Cabling Conjecture \cite{GAS} whose proof would imply that no hyperbolic knot in $S^3$ has a reducible surgery, it is reasonable to expect that there are $3$--manifolds in which no hyperbolic knot admits a reducible surgery. However, we are presently unaware of any $3$--manifold known to not have a hyperbolic knot with a non-trivial reducible surgery.

\begin{question}
	Which compact, connected, orientable $3$--manifolds do not contain a hyperbolic knot with a non-trivial reducible surgery?
\end{question}

While non-trivial reducible surgeries on hyperbolic knots in reducible manifolds do exist, see e.g.\ \cite{hoffmanmatignon}, we suspect that manifolds whose prime decompositions have at least $3$ summands are candidates.  

\begin{conj}
	A closed orientable $3$--manifold with at least $3$ summands does not contain a hyperbolic knot with a non-trivial reducible surgery.
\end{conj}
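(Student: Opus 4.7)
Proof by contradiction. Suppose $M = M_1 \# \cdots \# M_n$ with $n \geq 3$ prime summands contains a hyperbolic knot $K$ admitting a non-trivial reducible surgery along a slope $\sigma$. The first observation is that $K$ must meet every reducing sphere of $M$: since $K$ is hyperbolic, the exterior $M - K$ is irreducible, so any reducing sphere $S \subset M - K$ would bound a $3$--ball in $M-K$, hence also in $M$, contradicting essentiality of $S$. Fix a system $\mathcal{S} = \{S_1, \ldots, S_{n-1}\}$ of disjoint reducing spheres realizing the prime decomposition of $M$, and minimize their total intersection with $K$. This yields $n-1 \geq 2$ disjoint essential planar surfaces $P_1, \ldots, P_{n-1}$ in $M - K$, each having boundary slope the meridian $\mu$ of $K$.

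Dually, since $M_K(\sigma)$ is reducible, it contains an essential sphere $T$. Isotoping $T$ to meet the surgery solid torus in a minimal collection of meridional disks and setting $Q = T \cap (M - K)$ produces an essential planar surface $Q$ in $M-K$ whose boundary components all have slope $\sigma$ on $\bdry \nbhd(K)$. So $M - K$ simultaneously carries the essential planar collection $\{P_1, \ldots, P_{n-1}\}$ of slope $\mu$ together with an essential planar surface $Q$ of slope $\sigma$, with $\Delta(\mu, \sigma) \geq 1$ since the surgery is non-trivial.

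The plan is to carry out a Gordon--Scharlemann--Wu style intersection graph analysis on the intersections $P_i \cap Q$ in $M - K$. A classical ingredient is the distance bound for reducible fillings of a hyperbolic cusped manifold, which would already force $\Delta(\mu, \sigma) \leq 1$, so that $\sigma$ is an integer slope. The new leverage supplied by $n \geq 3$ is the availability of at least two disjoint planar surfaces of the same slope $\mu$. I would push the labelling, parity, and Scharlemann-cycle machinery through on each graph $G_i = P_i \cap Q$ simultaneously, and argue that the combinatorial constraints imposed by multiple disjoint slope--$\mu$ surfaces cannot be met by a single slope--$\sigma$ reducing planar surface in a hyperbolic exterior. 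A complementary approach is to view $K$ as a collection of arcs cut by $\mathcal{S}$, one bundle of arcs per summand, and to track how $\sigma$--surgery modifies the prime decomposition of $M$: the goal is to show that this modification is forced to either reproduce one of the existing reducing spheres (hence the reducibility of $M_K(\sigma)$ is not ``new'') or to trivialize $K$ inside one summand, in contradiction to hyperbolicity.

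The main obstacle is closing the gap left by the distance bound: ruling out the residual integer reducible surgery that the classical techniques still permit. Extracting the extra rigidity from the multiplicity of summands, rather than from a single reducing sphere, is exactly where the argument must do real work, and it is unclear to me whether the graph theoretic approach alone suffices or whether one needs additional input, such as a hyperbolic volume or cusp volume comparison between $M - K$ and $M_K(\sigma) - T$, or a Heegaard genus inequality exploiting Haken's additivity $g(M) = \sum g(M_i) \geq n$. I would expect the case $n = 3$ to already contain the essential difficulty, and would focus the technical effort there before attempting any inductive step on $n$.
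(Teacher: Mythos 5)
The statement you set out to prove is presented in the paper as a \emph{conjecture}, and the paper contains no proof of it: the authors explicitly say they are unaware of any $3$--manifold known to lack a hyperbolic knot with a non-trivial reducible surgery, and they offer manifolds with at least three summands only as candidates. So there is no argument of theirs to compare yours against, and your proposal must stand on its own. On its own terms it is a research plan, not a proof. The setup is sound: hyperbolicity of $K$ forces $K$ to meet every reducing sphere, minimizing geometric intersection yields the disjoint essential meridional planar surfaces $P_1,\dots,P_{n-1}$, and the Gordon--Luecke theorem on two reducible fillings (applied to the reducible trivial filling $M$ and the reducible filling $M_K(\sigma)$) gives $\Delta(\mu,\sigma)\le 1$. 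But everything after that is conditional. You acknowledge that it is ``unclear'' whether the graph-theoretic analysis of $P_i\cap Q$ closes the remaining distance-one case; that remaining case is precisely the open content of the conjecture, and the proposal stops exactly where the real work begins. The ``complementary approach'' of tracking how $\sigma$--surgery alters the prime decomposition is likewise stated as a goal rather than carried out.

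One of your proposed fallbacks is known to fail, and the paper says so in the discussion surrounding the conjecture: one cannot hope to extract a cusp-geometric lower bound (length of longitudes, cusp volume) from the presence of many meridional planar surfaces so as to invoke the $6$--Theorem. Any such bound would obstruct \emph{all} non-trivial exceptional surgeries, not merely reducible ones, contradicting Theorem~\ref{thm:main} of this paper, and it would also contradict \cite[Corollary 5.2]{ACFGK-cuspsizebounds}, which produces hyperbolic knots of cusp volume at most $9$ in every closed, connected, orientable $3$--manifold. Whatever leverage $n\ge 3$ provides must therefore be combinatorial or topological in a way that distinguishes reducing spheres in the filled manifold from essential tori (which Theorem~\ref{thm:main} shows can always arise). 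Until you exhibit that mechanism concretely --- for instance, an actual contradiction derived from Scharlemann cycles interacting with two disjoint slope-$\mu$ planar surfaces at distance one --- the conjecture remains open, and your write-up should be framed as a strategy rather than a proof.
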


Towards the conjecture, suppose $K$ is a hyperbolic knot in a closed orientable $3$--manifold $M$ with at least $3$ summands.  
One may hope that each planar meridional surfaces in the  knot complement $M-K$ arising from $K$ intersecting multiple reducing spheres would contribute a certain amount to the length of the shortest longitude of $K$.  From this, at least if $M$ had sufficiently many summands, one would be able to use the 6-Theorem to obstruct a non-trivial reducible surgery. However this would also obstruct a toroidal surgery contrary to Theorem~\ref{thm:main}.  Indeed, it would also contradict  \cite[Corollary~5.2]{ACFGK-cuspsizebounds} which shows that the topology of  $M$ cannot force all longitudes of hyperbolic knots in $M$ to be long.   

On the other hand, combinatorial structures in knot complements can induce obstructions.   For instance, 
hyperbolic alternating knots in $S^3$ that have at least $9$ twist regions (in twist-reduced diagrams) provide an obstruction the existence of non-trivial exceptional fillings; see \cite[Theorem 5.1]{lackenby2000word}.


\begin{ack}
KB thanks Jacob Caudell for conversations related to \cite[Conjecture 5]{caudell} that prompted this note.
This work was partially supported by Simons Foundation grant \#209184 to Kenneth Baker and by Simons Foundation grant \#524123 to Neil Hoffman.
\end{ack}

\bibliographystyle{alpha}
\bibliography{toroidalsurgeries}

\end{document}